\documentclass[12pt]{article}
\usepackage[utf8]{inputenc}

\usepackage{color}

\usepackage{epsfig}
\usepackage{cite}
\usepackage{latexsym,amsmath}
\usepackage{amsfonts}
\usepackage{float}
\usepackage{pict2e}
\usepackage{tikz}
\usepackage{caption}
\usepackage{amsfonts}
\usepackage{amssymb}
\usepackage{mathrsfs}
\usepackage{amsmath}
\usepackage{enumerate}
\usepackage{graphicx}
\usepackage{subfig}
\usepackage{MnSymbol}
\usepackage{mathtools}
\setlength{\textheight}{8.5in} \setlength{\textwidth}{6.2in}
\setlength{\oddsidemargin}{0in} \setlength{\parindent}{1em}

\begin{document}

	\bibliographystyle{plain}
		
		\pagestyle{myheadings}
		\thispagestyle{empty}
		\newtheorem{theorem}{Theorem}
		\newtheorem{corollary}[theorem]{Corollary}
		\newtheorem{definition}{Definition}
		\newtheorem{guess}{Conjecture}
		\newtheorem{problem}{Problem}
		\newtheorem{question}{Question}
		\newtheorem{lemma}[theorem]{Lemma}
		\newtheorem{proposition}[theorem]{Proposition}
		\newtheorem{observation}[theorem]{Observation}
		\newenvironment{proof}{\noindent {\bf
				Proof.}}{\hfill\rule{3mm}{3mm}\par\medskip}
		\newcommand{\remark}{\medskip\par\noindent {\bf Remark.~~}}
		\newcommand{\pp}{{\it p.}}
		\newcommand{\de}{\em}
		\newtheorem{example}{Example}

\title{\bf Chromatic  $\lambda$-choosable and $\lambda$-paintable graphs}

\author{Jialu Zhu\thanks{Department of Mathematics, Zhejiang Normal University, Email: 709747529@qq.com, }         \and
	Xuding Zhu\thanks{Department of Mathematics, Zhejiang Normal University, Email: xdzhu@zjnu.edu.cn, Grant numbers: NSFC 11571319, }}

\maketitle


\begin{abstract}
  Let $\phi(k)$ be the minimum number of vertices in a non-$k$-choosable $k$-chromatic graph. The Ohba conjecture, confirmed by Noel, Reed and Wu, asserts that $\phi(k) \ge 2k+2$.
  This bound is tight if $k$ is even.
  If $k$ is odd, then it is known that $\phi(k) \le 2k+3$ and it is conjectured by Noel 
  that $\phi(k) = 2k+3$. For a multi-set $\lambda=\{k_1,k_2, \ldots, k_q\}$ of positive integers,
  let $k_{\lambda} = \sum_{i=1}^q k_i$.
  A $\lambda$-list assignment of $G$ is a $k_{\lambda}$-list assignment $L$ for which the colour set $\cup_{v \in V(G)}L(v)$ can be partitioned into the disjoint union $C_1 \cup C_2 \cup \ldots \cup C_q$ of $q$ sets so that for each $i$ and each vertex $v$ of $G$, $|L(v) \cap C_i| \ge k_i$. We say $G$ is $\lambda$-choosable if $G$ is $L$-colourable for any $\lambda$-list assignment $L$ of $G$.   
  Let $\phi(\lambda )$ be the minimum number of vertices in a non-$\lambda$-choosable $k_{\lambda}$-chromatic graph.
  Let $1_{\lambda}$ be the multiplicity of $1$ in $\lambda$, and let $o_{\lambda}$ be the number of elements in $\lambda$ that are odd integers. If $1_{\lambda} = k_{\lambda}$ (i.e., $\lambda$ consists of $k_{\lambda}$ copies of $1$'s), then $\lambda$-choosable is equivalent to $k_{\lambda}$-colourable. In this case, $\phi(\lambda) = \infty$. 
  We prove that if  $1_{\lambda} \ne k_{\lambda}$, then $2k_{\lambda}+1_{\lambda}+2 \leqslant \phi(\lambda ) \leqslant 2k_\lambda+ o_\lambda +2$.
  In particular, if $1_{\lambda}=o_{\lambda}=t$, i.e. $\lambda$ contains no odd integer greater than $1$, then $\phi(\lambda ) = 2k_{\lambda}+t+2$.
  We also prove that  $\phi(\lambda) \leqslant 2k_{\lambda}+5 1_{\lambda}+3$. In particular, if $1_{\lambda}=0$, then $2k_{\lambda}+2 \leqslant \phi(\lambda) \leqslant 2k_{\lambda}+3$. We then introduce the concept of $\lambda$-paintability of graphs, which is an online version of $\lambda$-choosability. Let $\psi(\lambda)$ be the minimum number of vertices in a non-$\lambda$-paintable $k_{\lambda}$-chromatic graph. We determine the value of $\psi(\lambda)$ when each integer in $\lambda$ is at most $2$.

\medskip

\noindent  {\bf Keywords:}  $\lambda$-painting game, $\lambda$-list assignment, chromatic choosable, chromatic paintable.
\end{abstract}

\section{Induction}

For a graph $G$, a \emph{proper $k$-colouring} of $G$ is a mapping $f:V(G) \rightarrow \{1,2,\ldots,k\}$ such that $f(u) \neq f(v)$ for any edge $uv$ of $E(G)$. We say $G$ is \emph{$k$-colourable} if $G$ has a proper $k$-colouring. The \emph{chromatic number} $\chi(G)$ of $G$ is the minimum positive integer $k$ such that $G$ is $k$-colourable. A \emph{list assignment} of a graph $G$ is a mapping $L$ which assigns to each vertex $v$ of $G$ a set $L(v)$ of colours.
The {\em pallete} of a list assignment $L$ is $p(L) = \bigcup_{v \in V(G)} L(v)$. A \emph{proper $L$-colouring} of $G$ is a proper colouring $f$ of $G$ such that $f(v)\in L(v) $ for each vertex $v$ of $G$. We say $G$ is \emph{$L$-colourable} if $G$ has a proper $L$-colouring. A {\em $k$-list assignment} of $G$ is a list assignment $L$ of $G$ such that   $|L(v)| \geqslant k$ for every vertex $v$. If $G$ is $L$-colourable for every $k$-list assignment $L$ of $G$, then we say that $G$ is \emph{$k$-choosable}. The \emph{choice number or list chromatic number} $ch(G)$ of $G$ is the minimum positive integer $k$ such that $G$ is $k$-choosable.

It follows from the definitions that $\chi(G) \le ch(G)$ for any graph $G$, and it was shown in \cite{ERT1980} that bipartite graphs can have arbitrarily large choice number. An interesting problem is for which graphs $G$, $\chi(G)=ch(G)$. Such graphs are called {\em chromatic choosable}.
Chromatic choosable graphs has been studied extensively in the literature. There are a few challenging conjectures that assert certain families of graphs are chromatic choosable. One central problem in this area is the \emph{list colouring conjecture}, which asserts that line graphs are chromatic choosable \cite{BH1985}.
Another well-known problem in this area is Ohba conjecture, which has been proved by Noel, Reed and Wu in 2014 \cite{NRW2015}. It asserts that every graph $G$ with $|V(G)| \le 2\chi(G)+1$ is chromatic choosable.
For a positive integer $k$, let $\phi(k)$ be the minimum integer $n$ such that there exists a non-$k$-choosable $k$-chromatic graph with $n$ vertices. The result of Noel, Reed and Wu asserts that $\phi(k) \ge 2k+2$.
This result is sharp when $k$ is even, i.e. if $k$ is even then $\phi(k)=2k+2$. If $k$ is odd, then it is known that $\phi(k) \le 2k+3$ and it was a conjectured by Noel \cite{JA2013} that $\phi(k)=2k+3$.
In this paper, we are also interested in the minimum number of vertices of a $k$-chromatic graph $G$ which is not $L$-colourable for some $k$-list assignment $L$, however, we put some restrictions on the list assignments in consideration.

Assume $\lambda=\{k_1,k_2, \ldots, k_q\}$ is a multi-set of positive integers. Let $k_{\lambda} = \sum_{i=1}^q k_i$. A {\em $\lambda$-list assignment} of $G$ is a   $k_{\lambda}$-list assignment $L$ of $G$ of the form $L = \bigcup_{i=1}^qL_i$ (i.e., $L(v) = \bigcup_{i=1}^qL_i(v)$ for each vertex $v$), where each $L_i$ is a $k_i$-list assignment of $G$ and 
$p(L_i) \cap p(L_j) = \emptyset$ for $i \ne j$.  In other words,   the colour set $\bigcup_{v\in V(G)}L(v)$ can be partitioned into $q$ subsets $C_{1}, C_{2}, \ldots, C_{q}$ with $|L(v)\cap C_{i}| \ge  k_{i}$ for each vertex $v$ of $G$. We say $G$ is \emph{$\lambda$-choosable} if $G$ is $L$-colourable for any $\lambda$-list assignment $L$ of $G$. The concept of \emph{$\lambda$-choosability} can be viewed as a refinement of choosability of graphs. If $|\lambda|=1$, i.e. $\lambda=\{k_{\lambda} \}$, then $\lambda$-choosability is the same as $k_{\lambda}$-choosability; if $|\lambda| =k_{\lambda}$, i.e.
$\lambda=\{1,1,\ldots, 1\}$ ($k_{\lambda}$ copies of $1$), then
$\lambda$-choosability is the equivalent to $k_{\lambda}$-{colourability}. 
Given a multiset $\lambda$ and a positive integer $a$, let $m_{\lambda}(a) \ge 0$ be the multiplicity of $a$ in $\lambda$.
So  {$k_{\lambda} = \sum_{a \in \mathbb{N}} a m_{\lambda}(a)$}. 
We say multisets $\lambda_1, \lambda_2, \ldots, \lambda_t$ of positive integers form a partition of $\lambda$ if for each positive integer $a$, 
$m_{\lambda}(a) = \sum_{i=1}^t m_{\lambda_i}(a)$.
For $\lambda = \{k_1,k_2,\ldots, k_q\}$ and $\lambda'= \{k'_1,k'_2,\ldots,k'_p\}$, we write $\lambda \le \lambda'$ if there is a partition $\lambda'_1, \lambda'_2,\ldots, \lambda'_q$ of $\lambda'$ such that for $i=1,2,\ldots, q$, $k_i \le k_{\lambda'_i}$. Then $\le$ is a partial order on all multisets of positive integers. 
For example, there are   four partitions of $4$: $\{4\}, \{1,3\}, \{2,2\}, \{1,1,2\}, \{1,1,1,1\}$. We have $\{4\} \le \{1,3\} \le \{1,1,2\} \le \{1,1,1,1\}$ and also $\{4\} \le \{2,2\} \le \{1,1,2\}$. However, $\{1,3\}$ and $\{2,2\}$ are incomparable. 

It follows from the definitions that if $\lambda \le \lambda'$, then every $\lambda$-choosable graph is $\lambda'$-choosable. Conversely, it was proved in \cite{zhu2019} that if $\lambda \not\le \lambda'$, then there is a $\lambda$-choosable graph which is not $\lambda'$-choosable. 

All the partitions $\lambda$ of $k_{\lambda}$ are sandwiched between $\{k_{\lambda}\}$ and $\{1,1,\ldots, 1\}$. As observed above, {$\{k_{\lambda}\}$}-choosability is the same as $k_{\lambda}$-choosability, and $\{1,1,\ldots, 1\}$-choosability is equivalent to $k_{\lambda}$-colourability. For other partitions $\lambda$ of $k_{\lambda}$,  $\lambda$-choosability reveals a complex hierarchy of colourability of graphs.

The concept of $\lambda$-choosability is introduced in \cite{zhu2019} very recently. Nevertheless, some problems studied in the literature can be expressed in this language. For planar graphs, it follows from the classical result  Voigt \cite{Voigt1993} and the famous Four Colour Theorem that every planar graph is $\{1,1,1,1\}$-choosable but there are planar graphs that are not $\{4\}$-choosable. As a  strengthening of the result in \cite{Voigt1993},
it was proved by Choi and Kwon \cite{CK2017} that there are planar graphs that are not $\{1,3\}$-choosable, and the asked if every planar graph is $\{1,1,2\}$-choosable. Very recently, this question is answered in negative by Kemnitz and Voigt   \cite{KV2018} who proved that there are planar graphs that are not $\{1,1,2\}$-choosable.  

In this paper, we are interested in the smallest number of vertices in a  non-$\lambda$-choosable $k_{\lambda}$-chromatic graph.

 \begin{definition}
 	Assume $\lambda = \{k_1, k_2, \ldots, k_q\}$ is a multi-set of positive integers.
 	Let $$\phi(\lambda)=\min\{n: \text{ there exists a non-$\lambda$-choosable $k_{\lambda}$-chromatic $n$-vertex graph}\}.$$
 \end{definition}

The problem is to determine or find bounds for $\phi(\lambda)$.
If $\lambda'$ is obtained from $\lambda$ by replacing some integers in $\lambda$ by partitions of these integers, then we say $\lambda'$ is a refinement of $\lambda$.
If $\lambda'$ is a refinement of $\lambda$, then  $\lambda \le \lambda'$ and $k_{\lambda} = k_{\lambda'}$.
Therefore we have following proposition.

\begin{proposition}
	\label{prop-easy}
	If $\lambda'$ is a refinement of $\lambda$, then $\phi(\lambda') \ge \phi(\lambda)$. In particular,
	as any multi-set $\lambda$ is a refinement of $\{k_{\lambda}\}$, we conclude that
	$\phi(\lambda) \ge \phi(k_{\lambda}) \ge 2k_{\lambda} +2$.
\end{proposition}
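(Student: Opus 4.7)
The plan is to prove the implication at the level of list assignments: any $\lambda'$-list assignment that defeats a graph $G$ can be re-bundled into a $\lambda$-list assignment that also defeats $G$. This immediately gives $\phi(\lambda) \le \phi(\lambda')$. Writing $\lambda = \{k_1, \ldots, k_q\}$, since $\lambda'$ is a refinement of $\lambda$ I would fix a decomposition $\lambda' = \lambda'_1 \cup \cdots \cup \lambda'_q$ in which each $\lambda'_i$ is a partition of $k_i$; in particular $k_{\lambda'} = k_\lambda$, so a $k_{\lambda'}$-chromatic graph is the same thing as a $k_\lambda$-chromatic graph.

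Now let $G$ be a $k_\lambda$-chromatic graph on $\phi(\lambda')$ vertices that is not $\lambda'$-choosable, and let $L' = \bigcup_j L'_j$ be a defeating $\lambda'$-list assignment. The main step is a palette coarsening: for each vertex $v$ and each $i \in \{1,\ldots,q\}$, set
\[
L_i(v) \;=\; \bigcup_{j:\, k'_j \in \lambda'_i} L'_j(v).
\]
Since the palettes $p(L'_j)$ are pairwise disjoint, so are the $p(L_i)$, and for each $v$,
\[
|L_i(v)| \;=\; \sum_{k'_j \in \lambda'_i} |L'_j(v)| \;\ge\; \sum_{k'_j \in \lambda'_i} k'_j \;=\; k_i.
\]
Hence $L = \bigcup_i L_i$ is a bona fide $\lambda$-list assignment of $G$ with $L(v) = L'(v)$ as sets. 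Any proper $L$-colouring would then be a proper $L'$-colouring, contradicting the choice of $L'$. So $G$ is $k_\lambda$-chromatic but not $\lambda$-choosable, giving $\phi(\lambda) \le |V(G)| = \phi(\lambda')$.

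For the ``in particular'' clause I would observe that every multiset $\lambda$ is a refinement of the singleton $\{k_\lambda\}$, because $\lambda$ is itself a partition of the integer $k_\lambda$. The first part then yields $\phi(\lambda) \ge \phi(\{k_\lambda\}) = \phi(k_\lambda)$, and the Noel--Reed--Wu theorem supplies $\phi(k_\lambda) \ge 2k_\lambda + 2$. There is no real obstacle: the argument is essentially a bookkeeping exercise on the palette partition, and the only care required is to verify that the coarsened palettes remain disjoint and that $|L_i(v)| \ge k_i$, both of which are immediate from the disjoint-palette condition in the definition of a $\lambda'$-list assignment.
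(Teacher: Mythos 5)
Your proof is correct and follows essentially the same route as the paper, which deduces the proposition from the observation that a refinement $\lambda'$ of $\lambda$ satisfies $\lambda \le \lambda'$ and $k_{\lambda'} = k_{\lambda}$, so that every $\lambda$-choosable graph is $\lambda'$-choosable. You simply make explicit the palette-coarsening argument behind that definitional fact, which the paper leaves to the reader.
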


If $|\lambda| = k_{\lambda}$, i.e., $\lambda$ consists of $k_{\lambda}$ copies of $1$'s, then every $k_{\lambda}$-chromatic graph is $\lambda$-choosable. In this case, we set $\phi(\lambda) = \infty$. 
In the following, we only consider multisets $\lambda$ of positive integers with $|\lambda| < k_{\lambda}$. 

For such multisets of positive integers, it is natural that $\phi(\lambda)$ also highly depends to the number of $1$'s in $\lambda$.
  Let $1_{\lambda}$ be the multiplicity of $1$ in $\lambda$ (i.e. number of copies of $1$ in $\lambda$),
  and let $o_{\lambda}$ be the number of integers in $\lambda$ that are odd.
 In this paper, we prove the following result.

\begin{theorem}
	\label{range}
    For any multi-set $\lambda$ of positive integers with $|\lambda| < k_{\lambda}$, $$2k_{\lambda}+1_{\lambda}+2 \leqslant \phi(\lambda ) \leqslant 2k_{\lambda}+o_{\lambda}+2.$$
    In particular, if $1_{\lambda} =o_{\lambda}=t$, then $\phi(\lambda)=2k_{\lambda}+t+2.$
\end{theorem}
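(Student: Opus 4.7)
The plan is to prove the two inequalities separately; the equality statement under $1_\lambda = o_\lambda = t$ follows by substitution.

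For the upper bound $\phi(\lambda) \le 2k_\lambda + o_\lambda + 2$, I will exhibit an explicit bad example. The graph $G$ is a complete multipartite graph with $k_\lambda$ parts, consisting of $k_\lambda - o_\lambda - 1$ parts of size $2$, $o_\lambda$ parts of size $3$, and one part of size $4$; the hypothesis $|\lambda| < k_\lambda$ guarantees $o_\lambda < k_\lambda$ (so this count is nonnegative) and the existence of some $k_\ell \ge 2$. I will organise the $k_\lambda$ parts into $q$ blocks $B_1, \ldots, B_q$ with $|B_i| = k_i$, giving each odd $k_i$ a unique size-$3$ part and placing the size-$4$ part into a block $B_\ell$ with $k_\ell \ge 2$. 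For the $\lambda$-list assignment I use a pallete $C_1 \sqcup \cdots \sqcup C_q$ with each $|C_i|$ just large enough that the constraints $|L(v) \cap C_i| \ge k_i$ do not force $L(v)$ to be the full pallete; on vertices of block $B_i$ the $C_i$-portion of the list realises a classical Ohba-style non-$k_i$-choosable assignment for the multipartite structure of $B_i$, while on vertices outside $B_i$ the $C_i$-portion is saturating. A counting argument then shows that any proper $L$-colouring confines the colours of $C_i$ to $B_i$, reducing to whether each block can be coloured from its own $C_i$-lists; this fails in $B_\ell$ by construction.

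For the lower bound $\phi(\lambda) \ge 2k_\lambda + 1_\lambda + 2$, I proceed by induction on $1_\lambda$. The base case $1_\lambda = 0$ is immediate: the bound reads $|V(G)| \le 2k_\lambda + 1 = 2\chi(G) + 1$, so Noel--Reed--Wu gives $k_\lambda$-choosability, which implies $\lambda$-choosability since every $\lambda$-list assignment is in particular a $k_\lambda$-list assignment. For the inductive step with $1_\lambda \ge 1$, write $\lambda = \{1\} \cup \lambda'$, so that $k_{\lambda'} = k_\lambda - 1$ and $1_{\lambda'} = 1_\lambda - 1$; let $G$ be $k_\lambda$-chromatic with $|V(G)| \le 2k_\lambda + 1_\lambda + 1$, and let $L$ be a $\lambda$-list assignment with distinguished singleton colour set $C_0$. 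The reduction identifies a colour class $V_j$ of size at least $3$ in some optimal $k_\lambda$-colouring of $G$, together with a colour $c \in C_0$ contained in $L(v)$ for every $v \in V_j$; colouring all of $V_j$ with $c$ and passing to $G - V_j$ with the induced list assignment $L'$ (obtained by dropping $c$ and the vacant $C_0$-component) yields a $\lambda'$-list assignment on a graph of chromatic number at most $k_\lambda - 1$ and $|V(G - V_j)| \le 2k_\lambda + 1_\lambda + 1 - |V_j| \le 2k_{\lambda'} + 1_{\lambda'} + 1$, at which point the induction hypothesis applies.

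The main obstacle is locating this class-plus-colour pair. Since $|V(G)| \ge 2k_\lambda + 2$ when $1_\lambda \ge 1$, averaging guarantees some colour class of any $k_\lambda$-colouring has size at least $3$; the harder task is ensuring a shared colour in $C_0$ across such a class, for which the optimal colouring must be chosen with care (for instance, one maximising the number of size-$\ge 3$ classes, or using a Noel--Reed--Wu style kernel refinement), combined with a pigeonhole step on the $C_0$-portions of the lists restricted to the class. Degenerate cases in which every optimal colouring has only classes of size $1$ or $2$ are handled by smaller reductions that collapse such classes, still fitting the inductive budget. The corollary $\phi(\lambda) = 2k_\lambda + t + 2$ under $1_\lambda = o_\lambda = t$ is then immediate.
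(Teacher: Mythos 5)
Your overall architecture (induction on the number of $1$'s for the lower bound, an explicit complete multipartite counterexample for the upper bound) parallels the paper's, but both halves contain genuine gaps.

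For the lower bound, the step you yourself flag as ``the main obstacle'' is where the argument breaks: you require a colour class $V_j$ of size at least $3$ together with a \emph{single} colour $c\in C_0$ lying in $L(v)$ for every $v\in V_j$. No such pair need exist: the adversary can give the vertices of every large colour class pairwise disjoint $C_0$-portions, and neither a careful choice of the optimal colouring nor pigeonhole can manufacture a common colour, since $|C_0|$ may be as large as $|V(G)|$. The requirement is also unnecessary. Since $V_j$ is an independent set, you may colour each $v\in V_j$ by \emph{its own} colour from $L(v)\cap C_0$ (these colours need not agree), and then delete all of $C_0$ from the lists of the remaining vertices; the $\lambda'$-colouring of $G-V_j$ obtained by induction uses no colour of $C_0$, so it cannot conflict with $V_j$. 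This is exactly the device in the paper's Theorem~\ref{lower lemma}, which first reduces a general $\lambda$ with $1_\lambda=t$ to $\lambda_{k,t}$ ($t$ ones and one copy of $k-t$) via Proposition~\ref{prop-easy} and then runs the induction on $t$. With that one-line repair (and handling $|V(G)|\le 2k_\lambda+1_\lambda$ by monotonicity rather than by deletion, as the paper does) your induction goes through.

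For the upper bound, your construction and, more importantly, your non-colourability mechanism are not the paper's, and the mechanism as described is not justified. The claim that the lists can be arranged so that every proper $L$-colouring confines the colours of $C_i$ to the block $B_i$ is asserted, not proved; since every vertex $v$ of $G$ must satisfy $|L(v)\cap C_i|\ge k_i$, colours of $C_i$ necessarily appear on lists outside $B_i$ and nothing in the sketch prevents a colouring from using them there. Moreover, the blockwise picture is self-undermining: in your construction a block $B_i$ attached to an odd $k_i$ consists of one part of size $3$ and $k_i-1$ parts of size $2$, hence has $2k_i+1$ vertices and is $k_i$-choosable by Noel--Reed--Wu, so under a genuinely blockwise mechanism those size-$3$ parts would do no work at all. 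The non-choosability in examples of this type is a global phenomenon. The paper's route is to refine $\lambda$ to $\lambda'$ consisting of $o_\lambda$ ones and $(k_\lambda-o_\lambda)/2$ twos, invoke $\phi(\lambda)\le\phi(\lambda')$ from Proposition~\ref{prop-easy}, and for $\lambda'$ exhibit the explicit graph $K_{3*(a+b+1),1*(b-1)}$ with a concrete list assignment built from the three $2$-subsets of $3$-element colour blocks; the contradiction there comes from a $k$-clique meeting every part, which forces the colours on a copy of $K_{b+1}$ into a residual palette of only $b$ colours (Lemma~\ref{upper lemma 1} and Theorem~\ref{upper bound 1}). As it stands, your upper-bound half is a plausible target graph without a working list assignment or counting argument.
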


The gap between the lower bound and upper bound for $\phi(\lambda)$ is $o_{\lambda}- 1_{\lambda}$.
The precise value of $\phi(\lambda)$ seems to be a difficult problem. In particular, if
$\lambda$ consists of a single odd integer $k$, we only know that $2k+2 \le \phi(\lambda) \le 2k+3$.
So there is a gap of $1$.
For general   multi-set $\lambda$ of positive integer,
we prove the following result which suggests that, among the two variants $1_{\lambda}$ and $o_{\lambda}$,  $1_{\lambda}$ plays a more important role in determining $\phi(\lambda)$.

\begin{theorem}
	\label{thm-no1}
For any multiset $\lambda$ of positive integers with $|\lambda| < k_{\lambda}$, $\phi(\lambda) \le 2k_{\lambda}+51_{\lambda}+3$.	In particular, if $1_{\lambda}=0$, then $2k_{\lambda}+2 \leqslant \phi(\lambda) \leqslant 2k_{\lambda}+3$. 
\end{theorem}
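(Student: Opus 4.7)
The plan is to induct on $1_\lambda$. The base case $1_\lambda=0$ yields the sharper statement $\phi(\lambda)\le 2k_\lambda+3$, and each inductive step absorbs one additional $1$ by adjoining five independent vertices, paying exactly five vertices per unit increase in $1_\lambda$. A quick check shows that starting from $n'\le 2k_{\lambda'}+5\cdot 1_{\lambda'}+3$ and passing to $\lambda=\lambda'\cup\{1\}$ yields $n'+5\le 2k_\lambda+5\cdot 1_\lambda+1$, which has two vertices of slack relative to the target $2k_\lambda+5\cdot 1_\lambda+3$, giving the induction room to breathe.

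For the base case $1_\lambda=0$, my plan is to exhibit an explicit non-$\lambda$-choosable $k_\lambda$-chromatic graph on at most $2k_\lambda+3$ vertices. A natural candidate is a complete $k_\lambda$-partite graph $G=K_{V_1,\ldots,V_{k_\lambda}}$ with total order $2k_\lambda+2$ or $2k_\lambda+3$, realizing an Ohba-type extremal structure (e.g.\ part sizes of the form $(3,3,2,\ldots,2)$ or $(4,2,\ldots,2,1)$). Since every $k_i\in\lambda$ is at least $2$, I would partition the $k_\lambda$ parts of $G$ into $q$ consecutive blocks of $k_1,\ldots,k_q$ parts each and assign to the $i$-th block a fresh colour class $C_i$, defining $L_i$ on the corresponding vertices to mimic the known non-$k_i$-choosable construction for a small complete $k_i$-partite graph. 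Because the $C_i$ are pairwise disjoint, any proper $L$-colouring of $G$ restricts to a proper $L_i$-colouring of each block, and by design at least one block is uncolourable.

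For the inductive step, write $\lambda=\lambda'\cup\{1\}$ and let $G'$ be a non-$\lambda'$-choosable $k_{\lambda'}$-chromatic graph with $n'\le 2k_{\lambda'}+5\cdot 1_{\lambda'}+3$ vertices, witnessed by a $\lambda'$-list $L'$. Set $G=G'\vee I_5$, the join with five independent vertices $h_1,\ldots,h_5$; then $\chi(G)=k_\lambda$. Extend $L'$ on $V(G')$ by adding one fresh colour from a new pool $C$ designated as the new $1$-class, and design the lists $L(h_i)$ so that any proper $L$-colouring either uses a $C$-colour on some $h_i$---thereby barring $C$ from being used on $G'$ and reducing to the non-$L'$-colourability of $G'$---or forces the five $h_i$ to draw colours entirely from $p(L')$ in such a way that every proper $L'$-colouring of $G'$ collides with at least one $h_i$. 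The main obstacle is this last design step: the $h_i$ are independent and each carries a full $k_\lambda$-list, so their $\lambda'$-parts must be drawn from $p(L')$ so cleverly that no matter which $p(L')$-colour is shared among them, the remaining lists make $G'$ uncolourable. The constant $5$ enters as the smallest size of such a blocking gadget that still fits within the vertex budget $2k_\lambda+5\cdot 1_\lambda+3$. Once the list is in hand, the proof concludes by case analysis on how the five $h_i$ partition between $C$ and $p(L')$, checking that each subcase leads to a contradiction via the non-choosability of $(G',L')$.
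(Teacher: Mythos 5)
There is a genuine gap, in fact two. Your base-case plan does not work as described. In a $\lambda$-list assignment every vertex $v$ must satisfy $|L(v)\cap C_i|\ge k_i$ for \emph{every} $i$, not just for the class attached to the block containing $v$; so if you give the vertices of block $i$ only colours from $C_i$, the assignment is not a $\lambda$-list assignment, and if you repair it by giving every vertex $k_j$ colours from each $C_j$, then a proper $L$-colouring need not restrict to an $L_i$-colouring of block $i$ drawn from $C_i$ alone --- a vertex of block $i$ may legitimately use a colour of $C_j$, $j\ne i$. This cross-talk between classes is precisely the difficulty. The vertex count also fails: a non-$k_i$-choosable $k_i$-chromatic block needs at least $2k_i+2$ vertices by the Noel--Reed--Wu theorem, so $q$ decoupled blocks would need at least $2k_\lambda+2q>2k_\lambda+3$ vertices when $q\ge 2$. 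The paper instead uses a single global gadget, $K_{1*5,(k-1)*2}$ on $2k_\lambda+3$ vertices, in which each colour class $C_i$ is split into four pieces $S_{i,1},\dots,S_{i,4}$ (or $T_{i,1},\dots,T_{i,4}$) and every list takes exactly two of the four pieces from every class, so that each of the two cliques formed by the size-$2$ parts leaves exactly one colour $c_1$ resp.\ $c_2$ unused and the size-$5$ part cannot be coloured from $\{c_1,c_2\}$.

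For the inductive step, what you call ``the main obstacle'' --- designing the lists on the new independent set so that every escape route for the colouring is blocked --- is the entire content of the argument, and you leave it unconstructed. Moreover the paper's construction suggests that five new vertices per copy of $1$ is not enough: it adds parts of size $7$, and the number $7=5+2$ is structural, because when the new singleton colour is spent on an old part (of size $2$ or of size $5$) rather than on the new part, the uncoloured new part must be able to impersonate either kind of old part; its seven lists consist of copies of $L(v_1),\dots,L(v_5)$ together with copies of $L(u_{1,1}),L(u_{1,2})$, and one deletes whichever five or two vertices are not needed. This yields $2(k_\lambda-t)+3+7t=2k_\lambda+5t+3$ vertices, matching the claimed bound; the paper also handles all $t$ copies of $1$ in one step rather than by induction, though that difference is cosmetic. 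As it stands, your proposal identifies the right shape of the argument but proves neither the base case nor the step.
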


The paintability of a graph is an online version of list colourability  \cite{S2009,Zhu2009}.
Assume $f:V(G)\longrightarrow\{0,1,2,\ldots\}$ is a mapping which assigns a non-negative integer $f(v)$ to each vertex $v$.
The \emph{$f$-painting game} on $G$ is played by two players: Lister and Painter.
Initially, each vertex $v$ of $V(G)$ has $f(v)$ tokens, and is uncoloured.
In each round, Lister chooses a nonempty subset $U$ of uncoloured vertices, takes away one token from each $v\in U$. Painter colours vertices in an independent set $I$ of $G$ contained in $U$.
If at the end of a certain round, there is an uncoloured vertex with no tokens left, then Lister wins. Otherwise all the vertices are eventually coloured, then Painter wins.
We say $G$ is \emph{ $f$-paintable} if painter has a winning strategy in the $f$-painting game on $G$. We say $G$ is \emph{ $k$-paintable} when $G$ is $f$-paintable for the constant mapping $f(v)= k$ for all $v \in V(G)$. The \emph{painter number} of $G$, denoted by  $\chi_{p}(G)$ is defined as
$$\chi_{p}(G)=min\{ k: G \text{ is $k$-paintable} \}.$$
It follows from the definition that $ch(G) \le \chi_P(G)$ for every graph $G$ \cite{Zhu2009},
and it is known that the gap $\chi_P(G)-ch(G)$ can be arbitrarily large \cite{DL2016}.
An on-line version of Ohba's conjecture is proposed in \cite{KKLZ2012}: if $|V(G)|\leqslant2\chi(G)$, then $\chi(G)=\chi_{p}(G)$. We define $\psi(k)$ to be the minimum number of vertices in a non-$k$-paintable
$k$-chromatic graph. The above conjecture asserts that $\psi(k) \ge 2k+1$.
It was proved in \cite{HWZ2012} that  $\psi(k) \le 2k+1$, i.e. for each integer $k\ge 3$, there exsits a $k$-chromatic graph
$G$ with  $|V(G)|= 2k +1$ for which $\chi_{p}(G) > k$. So the conjecture, if true, is sharp. 
Some special cases of
the conjecture are verified (cf. \cite{KMZ2014}), however, the conjecture itself is largely open.
We extend the definition of $k$-paintability to $\lambda$-paintability as follows:

\begin{definition}
	Assume $\lambda =\{k_{1},k_{2},\ldots,k_{q}\}$ is a multi-set of positive integers.
	A $\lambda$-painting game on $G$
	is a $k$-painting game on $G$ for which the following holds: There are integers $k'_1, k'_2, \ldots, k'_q$
	such that in the first $k'_1$ rounds, each vertex $v$ of $G$ is selected by Lister in $k_1$ rounds;
	in the next $k'_2$ rounds, each vertex $v$ of $G$ is selected by Lister in $k_2$ rounds; and so on.
	We say $G$ is \emph{$\lambda$-paintable} if painter has a winning strategy in the $\lambda$-painting game on $G$.
\end{definition}

 For a multi-set $\lambda=\{k_1, k_2, \ldots, k_q\}$ of positive integers, let $\psi(\lambda)$ be the least integer $n$ such that there exists a non-$\lambda$-paintable $k_{\lambda}$-chromatic $n$-vertex graph. Again if
 $|\lambda|=k_{\lambda}$, i.e. $\lambda = \{1,1,\ldots, 1\}$, then $\lambda$-paintable is equivalent to
 $k_{\lambda}$-colourable. In this case, we set $\psi(\lambda)=\infty$.
 We prove the following result.

\begin{theorem}
	\label{thm-paint}
	Assume $\lambda=\{k_1, k_2, \ldots, k_q\}$ is a non-trivial multi-set of positive integer. If  $k_i \le 2$ for each $i$, then $\psi(\lambda)=2k_{\lambda}+1_{\lambda}+2$.
\end{theorem}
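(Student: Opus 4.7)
The plan is to split the equality into an easy upper bound and a more delicate lower bound.

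For the upper bound, I would first record the general fact $\psi(\mu) \le \phi(\mu)$ for every non-trivial multi-set $\mu$: given a $\mu$-list assignment $L$ admitting no proper $L$-colouring, trim each $L(v)$ so that $|L(v)\cap C_i| = k_i$ for every $i$, and let Lister, in phase $i$, enumerate the colours of $C_i$ one per round and select at each round the set of still-uncoloured vertices whose trimmed list contains the revealed colour. Any complete Painter response would assemble into a proper $L$-colouring, contradicting the choice of $L$, so Lister wins. Since each $k_i \le 2$, the only odd entries of $\lambda$ are $1$'s, giving $o_\lambda = 1_\lambda$, and Theorem \ref{range} then yields $\phi(\lambda) \le 2k_\lambda + o_\lambda + 2 = 2k_\lambda + 1_\lambda + 2$, whence $\psi(\lambda) \le 2k_\lambda + 1_\lambda + 2$.

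For the lower bound, I aim to show by induction on $q = |\lambda|$ that every $k_\lambda$-chromatic graph $G$ with $|V(G)| \le 2k_\lambda + 1_\lambda + 1$ admits a winning Painter strategy, describing Painter's play phase by phase. The invariant I would maintain is that at the start of each phase the uncoloured subgraph $H$ satisfies $\chi(H) \le k_{\lambda'}$ and $|V(H)| \le 2k_{\lambda'} + 1_{\lambda'} + 1$, where $\lambda'$ is the multi-set of phases still to be played; a short counting check shows this is preserved provided the set $S$ that Painter finishes painting during the current phase (with parameter $k_i$) satisfies $|S| \ge 2(\chi(H) - \chi(H-S)) + [k_i = 1]$. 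In particular it suffices to paint one colour class of size $\ge 3$ during each $1$-phase and two colour classes of total size $\ge 4$ during each $2$-phase, and such classes exist by pigeonhole whenever the invariant is tight, since then $|V(H)| \ge 2\chi(H) + 1$. For a $1$-phase Painter simply commits in advance to a colour class $S$ of size $\ge 3$ in a fixed proper $\chi(H)$-colouring and paints $S \cap V_j$ in every round $j$; each $S \cap V_j$ is independent because $S$ is.

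The main obstacle is the $2$-phase subroutine. Painter commits to two colour classes $S_1, S_2$ of a proper $\chi(H)$-colouring with $|S_1| + |S_2| \ge 4$ and must schedule each vertex of $S_1 \cup S_2$ to be painted in one of its two phase rounds so that the vertices painted in each round form an independent set. The only obstruction is an edge $uv$ with $u \in S_1$, $v \in S_2$ assigned to the same round, so the scheduling becomes a $2$-\textsc{sat} instance with one variable per vertex of $S_1 \cup S_2$ (selecting one of its two rounds); I plan to show this instance is always satisfiable by exploiting the fact that Lister's selection pattern is $2$-regular on $S_1 \cup S_2$, and then convert the offline feasibility into an online strategy against adaptive Lister by having Painter defer each vertex's round choice until the later of its two rounds is about to close. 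The base case $q = 1$ is forced by non-triviality to $\lambda = \{2\}$, reducing to the direct check that every bipartite graph on at most five vertices is $2$-paintable. Establishing the $2$-\textsc{sat} feasibility and the online-deferral reduction is the technical heart of the argument; everything else is routine counting and induction.
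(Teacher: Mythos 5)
Your upper bound is fine and follows the paper's route: $\psi(\lambda)\le\phi(\lambda)$, and since every entry of $\lambda$ is at most $2$ we have $o_\lambda=1_\lambda$, so Theorem \ref{range} (equivalently Lemma \ref{upper lemma 1}) gives $\psi(\lambda)\le 2k_\lambda+1_\lambda+2$. The lower bound, however, has a genuine gap at the $2$-phase subroutine. Your feasibility claim --- that for any two colour classes $S_1,S_2$ with $|S_1|+|S_2|\ge 4$ the scheduling instance is satisfiable --- is false, already offline. Assigning each vertex of $S_1\cup S_2$ one of the two rounds in which Lister selects it, subject to no edge having both endpoints in the same round, is exactly the problem of properly colouring the bipartite graph $G[S_1\cup S_2]$ from the $2$-lists ``rounds in which the vertex is selected''. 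If $G[S_1\cup S_2]=K_{2,4}$ (sizes $2$ and $4$) or $K_{3,3}$ (sizes $3$ and $3$), these graphs are not $2$-choosable, and Lister can realize a bad $2$-list assignment as a legal phase schedule in which every vertex of $S_1\cup S_2$ is selected exactly twice; then no round assignment works, so the $2$-SAT instance is unsatisfiable and no amount of online deferral can save Painter. (Even where the offline instance is feasible, the deferral step needs care: $2$-choosable does not imply $2$-paintable, e.g.\ the theta graph $\theta_{2,2,4}$, so ``convert offline feasibility into an online strategy'' is not automatic.)

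The paper circumvents exactly this obstacle by choosing the two classes carefully rather than by total size. In Lemma \ref{2,2,...,2-paintable} it pairs the largest colour class $V_k$ with the smallest $V_1$ and uses the bound $|V(G)|\le 2\chi(G)+1$ to force a dichotomy: either $|V_k|\ge 4$, whence $|V_1|=1$ and $G[V_1\cup V_k]$ is a star $K_{1,m}$, or $|V_k|=3$ and $|V_1|\le 2$, so $G[V_1\cup V_k]$ is a subgraph of $K_{2,3}$; both $K_{1,m}$ and $K_{2,3}$ are $2$-paintable, and Proposition \ref{partition} splices the strategies together. The copies of $1$ are peeled off first (induction on $1_\lambda$, assigning each $1$-phase a colour class of size at least $3$, much as in your plan), precisely so that the residual all-$2$'s instance satisfies $|V|\le 2\chi+1$ and the dichotomy above applies. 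To repair your argument you would need to replace the criterion $|S_1|+|S_2|\ge 4$ by this largest-plus-smallest pairing (or some other guarantee that $G[S_1\cup S_2]$ is genuinely $2$-paintable), and to invoke the known $2$-paintability of $K_{1,m}$ and $K_{2,3}$ rather than a general scheduling claim.
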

	
	The union $\lambda \cup \lambda'$ of two multisets $\lambda$ and $\lambda'$ is obtained by adding the multiplicity of each integer in $\lambda$ and $\lambda'$. For example, if $\lambda = \{1,1,2\}$ and $\lambda'=\{1,2,3\}$, then $\lambda \cup \lambda'= \{1,1,1,2,2,3\}$. 
Equivalently, a multiset $\lambda$ of positive integers can also be denoted by a mapping
$f_{\lambda}: \{1,2,\ldots, \} \to \{0,1,\ldots,\}$, where $f_{\lambda}(i)$ is the multiplicity of $i$ in $\lambda$, i.e. $\lambda$ contains $f_{\lambda}(i)$ copies of $i$. By this definition, 
$f_{\lambda \cup \lambda'} = f_{\lambda} + f_{\lambda'}$.

\section{Proof of Theorem \ref{range}}
Assume $0 \leq t \prec k$ are integers. We denote by $\lambda_{k,t}$ the multi-set consisting $t$ copies of 1 and one copy of $k-t$.
 We denote $ch_{t}(G)$ by the minimum positive integer $k$ such that $G$ is $\lambda_{k,t}$-choosable.
Equivalently, $ch_t(G)$ is the minimum integer $k$ such that for any $k$-list assignment $L$ of $G$ for which
$|\bigcap_{v \in V(G)} L(v)| \ge t$, $G$ is $L$-colourable. The parameter $ch_t(G)$ is first studied in \cite{CK2017},
and is called the {\em $t$-common choice number} of $G$. It was proved in \cite{CK2017} that if $G$ is a $k$-chromatic graph, then
\begin{displaymath}
\chi(G)=ch_{k-1}(G)\leqslant ch_{k-2}(G)\leqslant\ldots\leqslant ch_{1}(G)\leqslant ch(G).
\end{displaymath}

Now we give a lower bound for $\phi(\lambda_{k,t})$.

\begin{theorem}
\label{lower lemma}
For any integer $0 \le t < k$, $\phi(\lambda_{k,t}) \ge 2k+t+2$. In other words,
  any $k$-chromatic graph $G$ with $|V(G)| \leq 2k+t+1$  is $\lambda_{k,t}$-choosable.
\end{theorem}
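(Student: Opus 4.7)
The plan is to prove the statement by induction on $t$, using the Noel-Reed-Wu theorem as the base case. The argument is cleanest in the equivalent formulation recalled just above the lemma: a $\lambda_{k,t}$-list assignment may be viewed as a $k$-list assignment $L$ with $|\bigcap_{v\in V(G)} L(v)| \ge t$.

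For the base case $t=0$, the claim is exactly the Noel-Reed-Wu theorem: every $k$-chromatic graph with at most $2k+1$ vertices is $k$-choosable. For the inductive step, assume the statement for $t-1$ (with all values of $k$), and let $G$ be a $k$-chromatic graph with $n = |V(G)| \le 2k + t + 1$ and $L$ a $\lambda_{k,t}$-list assignment. Pick $t$ colours $c_1, \ldots, c_t$ in $\bigcap_{v} L(v)$. If $n \le 2k + 1$, then Noel-Reed-Wu applied to $G$ yields $k$-choosability and we are done, so assume $n \ge 2k + 2$. Fix any proper $k$-colouring of $G$ with colour classes $V_1, \ldots, V_k$. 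Since $n > 2k$, pigeonhole produces some $V_i$ with $|V_i| \ge 3$.

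Set $I = V_i$ and colour every vertex of $I$ with the common colour $c_1$; this is valid because $I$ is independent and $c_1$ belongs to every list. Let $G' = G - I$ and $L'(v) = L(v) \setminus \{c_1\}$ for $v \in V(G')$. Then $|L'(v)| \ge k-1$ and $\bigcap_{v \in V(G')} L'(v)$ still contains $c_2, \ldots, c_t$, so $L'$ is a $\lambda_{k-1, t-1}$-list assignment on $G'$. The restriction of the chosen $k$-colouring of $G$ to $G'$ is a proper $(k-1)$-colouring of $G'$, and $G'$ cannot be $(k-2)$-coloured (otherwise extending by one fresh colour on $I$ would give $G$ a $(k-1)$-colouring, contradicting $\chi(G) = k$); hence $\chi(G') = k-1$ exactly. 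Moreover $|V(G')| \le (2k+t+1) - 3 = 2(k-1) + (t-1) + 1$. By the induction hypothesis applied with parameters $(k-1, t-1)$, there is a proper $L'$-colouring of $G'$, and combining it with $I \mapsto c_1$ produces a proper $L$-colouring of $G$.

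The real content of the theorem is absorbed into the base case, the deep Noel-Reed-Wu theorem, which is used as a black box. The inductive step is a routine reduction whose only non-trivial ingredients are (i) the pigeonhole observation that $n \ge 2k+2$ forces a colour class of size at least $3$, and (ii) the fact that removing an entire colour class of a $k$-chromatic graph produces a $(k-1)$-chromatic graph; both are immediate from $\chi(G) = k$. The proof executes $t$ successive reductions, each consuming one common colour together with a colour class of size at least $3$, until reaching the classical Ohba setting.
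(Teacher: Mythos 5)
Your induction is structurally the same as the paper's: base case $t=0$ is Noel--Reed--Wu, and the inductive step deletes a colour class of size at least $3$ together with one ``singleton'' layer of the list assignment, dropping from $(k,t)$ to $(k-1,t-1)$. The pigeonhole step, the verification that $\chi(G-I)=k-1$, and the vertex count $|V(G')|\le 2(k-1)+(t-1)+1$ all match the paper's argument and are correct.

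There is, however, one inaccurate claim on which your whole reduction rests: you assert that a $\lambda_{k,t}$-list assignment ``may be viewed as'' a $k$-list assignment $L$ with $|\bigcap_{v}L(v)|\ge t$. That is false for individual list assignments. A $\lambda_{k,t}$-list assignment only guarantees a partition $C_1\cup\dots\cup C_t\cup C_{t+1}$ of the palette with $|L(v)\cap C_i|\ge 1$ for $i\le t$; different vertices may meet $C_i$ in different colours, so there need be no common colour at all (e.g.\ $k=2$, $t=1$, $L(u)=\{1,3\}$, $L(v)=\{2,4\}$, $C_1=\{1,2\}$, $C_2=\{3,4\}$). Consequently, as written you prove only the (a priori weaker) statement about lists with $t$ truly common colours. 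The fix is small and comes in two flavours: either invoke the equivalence of the two choosability notions (stated in the paper before the theorem; its nontrivial direction is proved by merging each class $C_i$, $i\le t$, into a single new colour $c_i$ and observing that the vertices receiving $c_i$ in the merged instance form an independent set, so they can be given arbitrary, possibly distinct, colours from $L(v)\cap C_i$), or do what the paper does: colour each $v\in I$ by \emph{some} colour of $L(v)\cap C_1$ (not necessarily the same one), which is still proper because $I$ is independent and $C_1$ is disjoint from the colours used on $G-I$ via $L'(v)=L(v)\setminus C_1$. With either repair your argument is complete and coincides with the paper's proof.
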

\begin{proof}
We prove Theorem \ref{lower lemma} by induction on $t$.
If $t=0$, then the condition follows from Theorem \cite{NRW2015}.

Assume $t \geq 1$, and the theorem holds for $t-1$.
Let $G$ be a $k$-chromatic graph with $|V(G)| \leq 2k+t+1$.
If $|V(G)| \leq 2k+t$, then by induction hypothesis, $G$ is $\lambda_{k,t-1}$-choosable. Since $\lambda_{k,t}$ is a refinement of $\lambda_{k,t-1}$, by Proposition \ref{prop-easy}, $G$ is $\lambda_{k,t}$-choosable.

Thus  we may assume that $|V(G)|=2k+t+1$.
Let $c$ be a $k$-colouring of $G$, and let $X$ be a largest colour class. Then $|X| \geq 3$, and $G-X$ is a $(k-1)$-chromatic graph. Since $|V(G)-X| \le 2(k-1)+(t-1)+1$,
by induction hypothesis,  $G-X$ is $\lambda_{k-1,t-1}$-choosable.

Assume $L$ is a $\lambda_{k,t}$-list assignment of $G$. It follows from the definition that there is a set $C$ of colours such that for each vertex $v$ of $G$, $|L(v) \cap C|=1$. Let $L'(v)=L(v)-C$ for each vertex $v$ of $G$.  Then $L'$ is a $\lambda_{k-1,t-1}$-list assignment of $G$. Since $G-X$ is  $\lambda_{k-1,t-1}$-choosable,
there is an $L'$-colouring $f$ of $G-X$. By colouring each vertex $v \in X$ by the colour in $L(v) \cap C$, we obtain an extension of $f$ that is an $L$-colouring of $G$. This proves that $G$ is $\lambda_{k,t}$-choosable.
\end{proof}

Next, we present an upper bound on $\phi(\lambda)$.
First we consider the case that $\lambda$ consists of integers at most $2$, i.e. $\lambda=\{1,1,\ldots, 1, 2, 2, \ldots, 2\}$.

\begin{lemma}
	\label{upper lemma 1}
	Assume $\lambda=\{1,1,\ldots,1,2,\ldots,2\}$ where 1 has multiplicity $a$ and 2 has multiplicity $b \geq 1$. Let $k=k_\lambda=2b+a$. Then there exists a $k$-chromatic graph $G$ with $|V(G)|=2k+a+2$ which is not $\lambda$ -choosable.
\end{lemma}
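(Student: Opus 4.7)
The plan is to exhibit an explicit complete multipartite graph $G$ together with a $\lambda$-list assignment $L$ for which no proper $L$-colouring exists, then to derive the contradiction by a pigeonhole count on the palette. I would take $G$ to be the complete multipartite graph with $a+b+1$ parts of size $3$ (call them $E_1,\ldots,E_{a+b+1}$) and $b-1$ parts of size $1$ (call them $Q_1,\ldots,Q_{b-1}$; when $b=1$ there are no size-$1$ parts). A direct count gives $\chi(G) = (a+b+1)+(b-1) = 2b+a = k$ and $|V(G)| = 3(a+b+1)+(b-1) = 2k+a+2$, as required.

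To build $L$, I would use pairwise disjoint colour groups $C_j = \{c_{j,1},c_{j,2},c_{j,3}\}$ for $j=1,\ldots,b$ (size three is the smallest that allows an ERT-style restriction) and singletons $D_i = \{d_i\}$ for $i=1,\ldots,a$. For each size-$3$ part $E_l$ with vertices labelled $v_1^{(l)}, v_2^{(l)}, v_3^{(l)}$, I would set the ERT list
\[
L_{C_j}(v_i^{(l)}) \;=\; C_j \setminus \{c_{j,i}\} \qquad (i=1,2,3;\ j=1,\ldots,b).
\]
For each size-$1$ part $Q_m$ the choice of $2$-subset $L_{C_j}(q_m) \subseteq C_j$ is immaterial, and the singleton lists $L_{D_i}(v) = \{d_i\}$ are forced. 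Every vertex thus has list of size $2b+a = k$, so $L$ is indeed a $\lambda$-list assignment.

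The key observation is that for every size-$3$ part $E_l$,
\[
\bigcap_{i=1}^{3} L(v_i^{(l)}) \;=\; \{d_1,\ldots,d_a\},
\]
because the ERT intersection eliminates every colour of each $C_j$, while every $d_i$ lies in every vertex's list. Now assume a proper $L$-colouring exists, and let $C(P)$ be the set of colours used on a part $P$; since $G$ is complete multipartite, the $C(P)$ form a family of pairwise-disjoint nonempty subsets of the palette. A size-$3$ part $E_l$ has $|C(E_l)|=1$ only if $C(E_l) = \{d_i\}$ for some $i$, and these singletons must be distinct across parts, so at most $a$ of the $a+b+1$ size-$3$ parts can be singly coloured; the remaining $\geq b+1$ size-$3$ parts satisfy $|C(E_l)| \geq 2$, while each of the $b-1$ size-$1$ parts contributes $\geq 1$. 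Summing,
\[
3b+a \;=\; |\text{palette}| \;\geq\; \sum_P |C(P)| \;\geq\; a\cdot 1 + (b+1)\cdot 2 + (b-1)\cdot 1 \;=\; 3b+a+1,
\]
which is impossible.

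The one subtlety I anticipate is checking that routing some $d_i$'s to the size-$1$ parts (rather than to the $E_l$'s) cannot rescue the colouring; but any such re-routing only shrinks the pool of $d_i$'s available to singly-colour the size-$3$ parts, which can only increase the right-hand side of the display above, so the bound $a+3b+1$ is valid in every case.
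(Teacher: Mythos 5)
Your construction is correct, and it is in fact the same construction as the paper's: the same complete multipartite graph $K_{3*(a+b+1),1*(b-1)}$, and (up to relabelling) the same list assignment, since the paper's $B_j=\bigcup_{i=1}^b A_{i,j}\cup E$ with $A_{i,j}$ the three $2$-subsets of $A_i$ is exactly your ERT-style assignment with the $a$ universal singleton colours adjoined. Where you genuinely diverge is in verifying non-colourability. The paper argues locally: it isolates the $k$-clique $X$ consisting of the first vertex of each triple together with all singleton parts, observes that $\varphi(X)$ must exhaust $B_1$, deduces restricted lists for the remaining vertices $u_{i,2},u_{i,3}$, and finds a $K_{b+1}$ that must be coloured from a set of only $b$ colours. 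You instead run a global pigeonhole count: the colour sets used on distinct parts are pairwise disjoint, a size-$3$ part can be monochromatic only via one of the $a$ common colours $d_i$, so at least $b+1$ triples consume two colours each, and summing gives at least $a+3b+1$ colours against a palette of size $a+3b$. Your count is valid (and your closing remark correctly disposes of the case where some $d_i$ is spent on a singleton part or where fewer than $a$ triples are monochromatic, since either only raises the lower bound); it is arguably cleaner and more symmetric than the paper's clique argument, at the cost of being specific to this palette-counting situation, whereas the paper's list-restriction argument is the template it reuses in the harder Theorem~\ref{special lemma}.
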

\begin{proof}
	Let $G=K_{3*(a+b+1),1*(b-1)}$ be the $k$-partite graph with $a+b+1$ partite sets of size 3 and $b-1$ partite sets of size 1. The $a+b+1$ partite set of size 3 are $\{u_{i1}, u_{i2}, u_{i3}\}$ for $i=1, 2, \ldots, a+b+1$ and the $b-1$ partite sets of size 1 are $\{v_{j}\}$ for $j=1, 2, \ldots, b-1$. Then $G$ is a $k$-chromatic graph with $|V(G)|=2k+a+2$.
	
	We will construct a $\lambda$-list assignment $L$ such that $G$ is not $L$-colourable.
	
	\begin{itemize}
		\item  For $i=1,2,\ldots, b$, let $A_{i}$ be disjoint sets of size $3$,
		\item  For $i=1, 2, \ldots, b$,  let $A_{i,1}, A_{i,2}, A_{i,3}$ be the three $2$-subsets of $A_i$.
		Thus 
		$A_{i,1}\cup A_{i,2}\cup A_{i,3}=A_{i}$ and $A_{i,1}\cap A_{i,2}\cap A_{i,3}=\emptyset$,
		\item 	Let $E$ be a set of $a$ colours, and the sets $E, A_{i}$ are disjoint,
		\item   For $j=1,2,3$, let $B_j=\bigcup_{i=1}^b A_{i,j} \cup E$.
	\end{itemize}

	Let $L$ is an $\lambda$-list assignment of $G$ defined as follows:
	\begin{enumerate}
		\item   $L(u_{i,j})=B_j$, $i=1,2,\ldots,a+b+1$, $j=1,2,3$.
		\item $L(v_{j})=B_{1}$, $j=1,2,\ldots,b-1$.
	\end{enumerate}
	
		We may assume that $E=\{1, 2, \ldots, a\}$.
		For each vertex $v$ of $G$, the following hold:
	for each colour $c \in E$, $|L(v) \cap \{c\}|=1$; for each $i=1,2,\ldots, b$,
	$|L(v) \cap A_i| =2$.  So $L$ is a $\lambda$-list assignment, where there are
	$a$ copies of $1$ and $b$ copies of $2$.

	Now we show that $G$ is not $L$-colourable.
	Assume to the contrary that   $G$ has a proper $L$-colouring $\varphi$.
	Let $$X = \{u_{i,1}: i=1,2,\ldots, a+b+1\} \cup \{v_j: j=1,2,\ldots, b-1\}.$$
	Then $G[X]$ is a copy of $K_k$. As $L(v) = B_1$ for each vertex $v \in X$, we conclude that
	$\varphi(X)=B_{1}$.

	For $i=1,2,\ldots, a+b+1$ and $j=2,3$,
	let 
	$$L'(u_{i,j}) = (L(u_{i,j}) - B_1) \cup (\{\varphi(u_{i,1})\} \cap L(u_{i,j})).$$ 
	As $u_{i,j}$ is adjacent to every vertex of $X$ except 
	$u_{i,1}$, we conclude that 
	$\varphi(u_{i,j}) \in L'(u_{i,j})$.

If $\varphi(u_{i,1})\in  { \bigcup _{i=1}^b A_{i,1}}$, then $\varphi(u_{i,1})$ is contained in exactly one of $L(u_{i,2}), L(u_{i,3})$.
(As each colour in $A_i$ belongs to exactly two of the lists $L(u_{i,1}), L(u_{i,2}), L(u_{i,3})$).
 Thus 
  $$\{L'(u_{i,2}),L'(u_{i,3})\}=\{(\bigcup_{i=1}^{b}A_{i})-B_{1},((\bigcup_{i=1}^{b}A_{i})-B_{1})\cup \{\varphi(u_{i,1})\}\},$$

If $\varphi(u_{i,1}) \in E$, then  $\varphi(u_{i,1})$ is contained in both $L(u_{i,2}), L(u_{i,3})$. 
Thus
$$\{L'(u_{i,2}),L'(u_{i,3})\}=\{((\bigcup_{i=1}^{b}A_{i})-B_{1})\cup \{\varphi(u_{i,1})\}\}.$$

Assume   $m$ vertices  in  $\{ u_{i,1}: i=1,2,\ldots, a+b+1\}$ are coloured by a colour of $E$. Then $m \le |E| =a$.
Let $Y=\{u_{i,j}: L'(u_{i,j})=(\bigcup_{i=1}^{b}A_{i})-B_{1}\}$. Then $|Y|=a+b+1-m \geq a+b+1-a=b+1$. Thus  $G[Y]$ contains a copy of $K_{b+1}$, and all the vertices in $Y$ are coloured by colours from $(\bigcup_{i=1}^{b}A_{i})-B_{1}$. However, $|(\bigcup_{i=1}^{b}A_{i})-B_{1}|=b$. This is a contradiction.
\end{proof}

\begin{theorem}
\label{upper bound 1}
For any non-trivial multi-set $\lambda$ of positive integers, $\phi(\lambda) \leq 2k_\lambda+o_\lambda+2.$
\end{theorem}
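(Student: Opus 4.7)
The plan is to reduce the general case to Lemma~\ref{upper lemma 1}, which already produces a small non-$\lambda'$-choosable graph when $\lambda'$ consists only of 1's and 2's. Given a non-trivial $\lambda=\{k_1,\ldots,k_q\}$, I will build a refinement $\lambda'$ of $\lambda$ whose parts are all 1 or 2, apply Lemma~\ref{upper lemma 1} to $\lambda'$, and then transfer the bound to $\lambda$ using the monotonicity $\phi(\lambda)\le\phi(\lambda')$ recorded in Proposition~\ref{prop-easy}.

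Concretely, replace each $k_i$ by a partition into $\lfloor k_i/2\rfloor$ copies of 2, together with one additional copy of 1 when $k_i$ is odd, and let $\lambda'$ be the resulting multiset. Two bookkeeping identities are then immediate: $k_{\lambda'}=k_{\lambda}$, and the multiplicity of 1 in $\lambda'$ is exactly $o_{\lambda}$, the number of odd entries of $\lambda$ (each odd $k_i$ contributes a single 1, and every even $k_i$ contributes none). Because $|\lambda|<k_{\lambda}$ some $k_i$ must be $\ge 2$, so $\lambda'$ contains at least one 2; this is precisely the hypothesis $b\ge 1$ needed to invoke Lemma~\ref{upper lemma 1}. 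That lemma yields a $k_{\lambda}$-chromatic graph on $2k_{\lambda}+o_{\lambda}+2$ vertices that is not $\lambda'$-choosable, hence $\phi(\lambda')\le 2k_{\lambda}+o_{\lambda}+2$.

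Since $\lambda'$ was obtained from $\lambda$ by replacing each $k_i$ with a partition of $k_i$, it is a refinement of $\lambda$ in the sense of Proposition~\ref{prop-easy}, so $\phi(\lambda)\le\phi(\lambda')\le 2k_{\lambda}+o_{\lambda}+2$, as required. I do not expect any genuine obstacle here: the combinatorial heart of the argument, namely exhibiting a defeating list assignment for the complete multipartite graph $K_{3*(a+b+1),1*(b-1)}$, was carried out inside Lemma~\ref{upper lemma 1}, and the present statement is just a matter of matching parameters and invoking the refinement monotonicity. The only thing worth double-checking is the parity bookkeeping, together with the observation that non-triviality of $\lambda$ is exactly what guarantees the hypothesis $b\ge 1$ of the lemma.
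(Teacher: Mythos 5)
Your proposal is correct and is essentially identical to the paper's own proof: both refine $\lambda$ into the multiset $\lambda'$ with $o_{\lambda}$ copies of $1$ and $(k_{\lambda}-o_{\lambda})/2$ copies of $2$, invoke Lemma~\ref{upper lemma 1} for $\lambda'$, and transfer the bound via the refinement monotonicity of Proposition~\ref{prop-easy}. Your explicit check that non-triviality guarantees $b\ge 1$ is a small point the paper leaves implicit, but nothing more.
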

\begin{proof} For any non-trivial multiset $\lambda$ of positive integers, let  
	$\lambda'$ be the multiset which consists of $o_{\lambda}$ copies of $1$, and
	$(k_{\lambda}-o_{\lambda})/2$ copies of $2$. Then $\lambda'$ is a refinement of $\lambda$.
	 By Proposition \ref{prop-easy} and Lemma \ref{upper lemma 1}, $\phi(\lambda) \le \phi(\lambda')= 2k_{\lambda'}+1_{\lambda'}+2 =  2k_\lambda+o_\lambda+2$.
\end{proof}

\begin{corollary}
If $\lambda$ is a non-trivial multiset of positive integers and  $o_{\lambda}=1_{\lambda}$,   then $\phi(\lambda)=2k_{\lambda}+o_{\lambda}+2$.
\end{corollary}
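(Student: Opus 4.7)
The plan is straightforward: combine the upper bound from Theorem \ref{upper bound 1} with a matching lower bound extracted from Theorem \ref{lower lemma} via the refinement partial order. Theorem \ref{upper bound 1} already supplies $\phi(\lambda) \le 2k_\lambda + o_\lambda + 2$, so the entire task is to establish the lower bound $\phi(\lambda) \ge 2k_\lambda + 1_\lambda + 2$; the hypothesis $o_\lambda = 1_\lambda$ then forces the two bounds to coincide.

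For the lower bound, I would write $\lambda$ as the disjoint union of its $1_\lambda$ copies of $1$ and the sub-multiset $\mu$ consisting of the remaining entries (each at least $2$). The non-triviality assumption $|\lambda| < k_\lambda$ prevents $\lambda$ from being all $1$'s, so $\mu$ is nonempty and $k_\mu = k_\lambda - 1_\lambda \ge 2$; in particular $1_\lambda < k_\lambda$, so $\lambda_{k_\lambda,\, 1_\lambda}$ is well-defined. Since $\mu$ is a partition of $k_\lambda - 1_\lambda$, replacing the single block $\{k_\lambda - 1_\lambda\}$ inside $\lambda_{k_\lambda,\, 1_\lambda}$ by $\mu$ exhibits $\lambda$ as a refinement of $\lambda_{k_\lambda,\, 1_\lambda}$.

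Applying Proposition \ref{prop-easy} together with Theorem \ref{lower lemma} then yields
$$\phi(\lambda) \ge \phi(\lambda_{k_\lambda,\, 1_\lambda}) \ge 2k_\lambda + 1_\lambda + 2 = 2k_\lambda + o_\lambda + 2,$$
and combining this with the upper bound completes the proof. There is no substantive obstacle here: the argument is purely the assembly of the two previously established bounds, and the only thing to verify is the refinement claim, which is immediate from the construction of $\mu$.
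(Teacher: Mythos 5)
Your proof is correct and follows essentially the same route the paper intends: the upper bound is Theorem \ref{upper bound 1}, and the lower bound comes from observing that $\lambda$ refines $\lambda_{k_\lambda,1_\lambda}$ and applying Proposition \ref{prop-easy} together with Theorem \ref{lower lemma}. Your verification that $1_\lambda < k_\lambda$ (so that $\lambda_{k_\lambda,1_\lambda}$ is well-defined) is a worthwhile detail the paper leaves implicit.
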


\begin{theorem}
	\label{special lemma}
	Assume  $\lambda$ is a non-trivial multiset  of positive integers. 
	Then there is a $k_\lambda$-chromatic graph $G$ with   $|V(G)|= 2k_\lambda + 5 1_{\lambda} + 3$ such that $G$ is not $\lambda$-choosable. Hence $\phi(\lambda)\le 2k_{\lambda}+ 51_{\lambda} + 3$.
\end{theorem}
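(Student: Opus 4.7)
The plan is to construct, for any non-trivial multiset $\lambda$, a $k_\lambda$-chromatic graph $G$ of order $2k_\lambda+5\cdot 1_\lambda+3$ carrying a $\lambda$-list assignment with no proper $L$-colouring. First I would reduce to a canonical form: by Proposition \ref{prop-easy}, $\phi(\lambda)\le\phi(\lambda')$ whenever $\lambda'$ is a refinement of $\lambda$, and since every integer $k\ge 2$ is a sum of $2$'s and $3$'s, $\lambda$ admits a refinement $\lambda'=\{1^a,2^b,3^c\}$ with $a=1_\lambda$, $a+2b+3c=k_\lambda$, and $1_{\lambda'}=1_\lambda$ (refining each odd $k_i\ge 3$ as $(k_i-3)/2$ copies of $2$ together with one $3$ introduces no new $1$'s). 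In the sub-case $c=0$, Lemma \ref{upper lemma 1} already supplies a graph of order $2k_\lambda+a+2\le 2k_\lambda+5a+3$, so the interesting case is $c\ge 1$, to which I specialise from here on.

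For the graph, I would generalise the construction in Lemma \ref{upper lemma 1}. Take $G$ to be a complete multipartite graph with three kinds of building blocks: ``$2$-blocks'' as in Lemma \ref{upper lemma 1}; larger ``$3$-blocks'' modelled on the $K_{3,3,3,2,\ldots,2}$ graph that witnesses $\phi(k)\le 2k+3$ for odd $k$; and $a$ independent ``$1$-blocks'' of $5$ vertices each, each joined to the rest, accounting for the term $5\cdot 1_\lambda$. The partite structure (a mix of size-$3$, size-$2$ and size-$1$ parts) is chosen so that $\chi(G)=k_\lambda$ and $|V(G)|=2k_\lambda+5a+3$. On the list side, I partition the palette in the style of Lemma \ref{upper lemma 1}: a set $E=\{e_1,\ldots,e_a\}$ of singleton colour classes for the $1$'s; a size-$3$ class $A_i$ for each $2$, with three overlapping $2$-subsets $A_{i,1},A_{i,2},A_{i,3}$; and, newly, a size-$5$ class $D_\ell$ for each $3$, with three $3$-subsets $D_{\ell,1},D_{\ell,2},D_{\ell,3}$ chosen so that any two meet in exactly one colour and together cover $D_\ell$. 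Setting $B_j=E\cup\bigcup_i A_{i,j}\cup\bigcup_\ell D_{\ell,j}$, I assign the lists $B_1,B_2,B_3$ to the three vertices of each ``triple'' in rotation (as in Lemma \ref{upper lemma 1}) and $B_1$ to every singleton partite set and to the vertices of the $1$-block independent sets.

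The hard part is the non-colourability argument, modelled on Lemma \ref{upper lemma 1}. I would identify a clique $X$ of size $k_\lambda$ consisting of vertices whose list is $B_1$, so any proper $L$-colouring $\varphi$ must use every colour of $B_1$ on $X$; then for each ``triple'' vertex $v$ outside $X$ I would analyse the residual list $L'(v)=(L(v)\setminus B_1)\cup(\{\varphi(v')\}\cap L(v))$, where $v'$ is the $B_1$-partner of $v$ in its triple. Each size-$5$ block $D_\ell$ now introduces three possible residual patterns, depending on whether $\varphi(v')$ lies in $D_\ell\cap B_1$, in $D_\ell\setminus B_1$, or in $E\cup\bigcup_i A_i$, and the pigeonhole argument of Lemma \ref{upper lemma 1} must be extended to show that, summed over the distribution of values of $\varphi|_X$, the residuals leave too few colours for the remaining triple vertices and $1$-block vertices to be properly coloured. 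The principal obstacle is to choose the $3$-subsets $D_{\ell,1},D_{\ell,2},D_{\ell,3}$ precisely so that this pigeonhole still closes when multiple size-$5$ blocks are present simultaneously with size-$3$ blocks and singleton classes — in effect, the task of extending the classical $\phi(k)\le 2k+3$ construction for odd $k$ to the multi-class $\lambda$-list setting.
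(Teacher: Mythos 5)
Your proposal is a plan rather than a proof, and the part you leave open is precisely the mathematical content of the theorem. The reduction to $\lambda'=\{1^a,2^b,3^c\}$ via Proposition \ref{prop-easy} is legitimate, but everything after that is conditional on choosing the $3$-subsets $D_{\ell,1},D_{\ell,2},D_{\ell,3}$ of each size-$5$ class so that a pigeonhole argument ``still closes'' --- and you explicitly flag that you do not know how to do this. This is not a routine extension of Lemma \ref{upper lemma 1}: that lemma's argument hinges on each colour of $A_i$ lying in exactly two of the three lists $B_1,B_2,B_3$, so that colouring a $B_1$-vertex from $A_{i,1}$ enlarges exactly one residual list by one element; with $|D_\ell|=5$ and three $3$-subsets pairwise meeting in one colour, the bookkeeping of residual lists changes qualitatively and the clique-versus-palette count you need is not established. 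The paper does not follow this route at all: for $1_\lambda=0$ it uses $K_{1*5,(k-1)*2}$ (one part of size $5$, $k-1$ parts of size $2$) with a four-way split $S_1,\dots,S_4$, $T_1,\dots,T_4$ of the colour classes, and the contradiction comes from the fact that the two disjoint $(k-1)$-cliques $U_1,U_2$ leave only two spare colours $c_1,c_2$ for the five vertices of the size-$5$ part, whose lists are arranged so that no choice of $\{c_1,c_2\}$ meets all of them.

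There is also a concrete problem with your treatment of the $1$'s. If each copy of $1$ contributes an independent $5$-vertex partite set, your graph has $2k_\lambda+3\cdot 1_\lambda+3$ vertices, not $2k_\lambda+5\cdot 1_\lambda+3$ as the theorem asserts; more importantly, giving all five of those vertices the list $B_1$ makes them part of the big $B_1$-clique-cover analysis in a way you have not accounted for, and it is not clear the construction remains non-colourable. The paper instead adds, for each copy of $1$, a partite set of size $7$ whose seven vertices carry copies of the five lists of the size-$5$ part together with the two lists of a size-$2$ part, plus $t$ universal new colours $F$ on every vertex; the size $7$ is needed exactly so that an uncoloured leftover block can simulate \emph{either} a size-$2$ part \emph{or} the size-$5$ part, depending on which original partite set was absorbed by a colour of $F$. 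Your $5$-vertex blocks cannot play both roles, so even the reduction step in your plan for $1_\lambda>0$ would fail as stated.
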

\begin{proof}
Assume $\lambda =\{k_1,k_2, \ldots, k_q\}$, where $k_1, k_2, \ldots, k_a$ are odd, and 
$k_{a+1}, k_{a+2}, \ldots, k_q$ are even. 

	\bigskip
	\noindent
	{\bf Case 1}  $1_{\lambda} = 0$.   

	Let $k=k_{\lambda}$ and let $G=K_{1*5,(k -1)*2}$ be the complete $k $-partite graph with one partite set of size $5$ and $k  -1$ partite
	sets of size $2$.
	The  $k-1$ partite sets of size $2$ are $V_i=\{u_{i,1}, u_{i,2}\}$ for $i=1,2,\ldots, k-1$,
	and the partite set of size $5$ is $V_k= \{v_1, v_2, v_3, v_4, v_5\}$. Then $G$ is a $k $-chromatic graph with $|V(G)|=2k+3$.
	We shall construct a $\lambda$-list assignment $L$ of $G$ so that
	$G$ is not $L$-colourable.

	\begin{itemize}
		\item For $i=1,2,\ldots,a$, $j=1,2,3,4$, let $S_{i,j}$ be disjoint sets of size $\frac{k_i-1}{2}$.
		\item  For $i=a +1, a +2, \ldots, q$, $j=1,2,3,4$, let $T_{i,j}$ be disjoint sets of size $\frac{k_i}{2}$.
		\item 	For $j=1,2,3,4$, let
		$$S_j = \bigcup_{i=1}^{a} S_{i,j}, \ T_j =  \bigcup_{i=a +1}^q T_{i,j}.$$
		\item 	Let $E$ be a set of $a$ colours, and the sets $E, S_{i,j}, T_{i,j}$ are pairwise disjoint.
		\item   For $i=1,2,\ldots, a$, let $X_i$ be an arbitrary subset of $\bigcup_{j=1}^4 S_{i,j}$ of size $k_i$, and let $X= \bigcup_{i=1}^{a} X_i$. (Note that here we used the fact that $k_i \ne 1$, for otherwise $S_{i,j}$ would be emptyset and such a set $X_i$ does not exist.)
	\end{itemize}

	Let $L$ be the $\lambda$-list assignment of $G$ defined as follows:
	\begin{enumerate}
		\item   $L(v_1)=E\cup S_1\cup S_3\cup T_1\cup T_3$, $L(v_2)=E\cup S_1\cup S_4\cup T_1\cup T_4$, \\
	$L(v_3)=E\cup S_2\cup S_3\cup T_2\cup T_3$, $L(v_4)=E\cup S_2\cup S_4\cup T_2\cup T_4$,\\
 $L(v_5) = X \cup T_2\cup T_4$;
		\item $L(u_{i,1})=E\cup S_1 \cup S_2 \cup T_1 \cup T_2$,  for $i=1,2,\ldots,k -1$;
		\item   $L(u_{i,2})=E\cup S_3 \cup S_4 \cup T_3 \cup T_4$,  for $i=1,2,\ldots,k -1$.
	\end{enumerate}
	
	First we verify that $L$ is a $\lambda$-list assignment of $G$.
	We may assume that $E=\{1,2,\ldots, a\}$. Let
\[
C_i = \begin{cases}
\{i\} \cup  \bigcup_{j=1}^4 S_{i,j}, & \text{ for $i=1, 2, \ldots,  a$ }, \cr
\bigcup_{j=1}^4 T_{i,j}, &  \text{ for $i=a +1, a +2, \ldots, q$ }.
\end{cases}
\]
	
	Then $C_1, C_2, \ldots, C_q$ are pairwise disjoint and for each vertex $v$ of $G$,
	$|L(v) \cap C_i| = k_i$ for $i=1,2,\ldots, q$.
		So $L$ is a $\lambda$-list assignment of $G$.
	
	Next we show that $G$ is not $L$-colourable.
	Assume to the contrary that $\varphi$ is an $L$-colouring of $G$.  For $j=1,2$, let $U_j = \{u_{1,j}, u_{2,j}, \ldots, u_{k -1, j}\}$. Each of $U_1, U_2$ induces a complete graph of order $k -1$. Hence
	$|\varphi(U_j)| = k -1$. As $$\varphi(U_1) \subseteq E\cup S_1 \cup S_2 \cup T_1 \cup T_2$$
	and $$\varphi(U_2) \subseteq E \cup  S_3 \cup S_4 \cup T_3 \cup T_4,$$ and
	each of $E\cup S_1 \cup S_2 \cup T_1 \cup T_2$ and $E \cup  S_3 \cup S_4 \cup T_3 \cup T_4$ is a set of size $k$,
	we conclude that
	$$|E\cup S_1 \cup S_2 \cup T_1 \cup T_2 - \varphi(U_1)|=1   \text{ and } |E \cup  S_3 \cup S_4 \cup T_3 \cup T_4 - \varphi(U_2)|=1. $$
	Assume
	$$E\cup S_1 \cup S_2 \cup T_1 \cup T_2 - \varphi(U_1) = \{c_1\}   \text{ and } E \cup  S_3 \cup S_4 \cup T_3 \cup T_4 - \varphi(U_2) =\{c_2\}.$$

	For each vertex $v \in V_k $, $v$ is adjacent to all the vertices in $U_1 \cup U_2$. Hence 
	$\varphi(v) \in L(v)  \cap \{c_1, c_2\}$. In particular,
	$L(v) \cap \{c_1, c_2\}   \ne \emptyset$.
	As $L(v_5) \cap E = \emptyset$, we conclude that $\{c_1, c_2\} - E \ne \emptyset$.

	First we assume that $c_1 \in E$.  Then $c_2 \notin E$. This implies that   $$E \subseteq \varphi(U_2).$$
	As all the vertices $v  \in V_k$ are adjacent to every vertex in $U_2$, we conclude that $\varphi(v) \notin E$
	and hence 
	   $\varphi(v)= c_2$.
 But $$\bigcap_{v \in V_k} L(v)= \emptyset,$$
 which is a contradiction.
	
	The case that $c_2 \in E$ is symmtric.
	
	Assume $c_1, c_2\notin E$. Thus $c_1 \in S_1 \cup S_2 \cup T_1 \cup T_2$ and $c_2 \in S_3 \cup S_4 \cup T_3 \cup T_4$.
	
	If $c_1 \in S_1 \cup T_1$, then since $L(v_3) = E \cup S_2 \cup S_3 \cup T_2 \cup T_3$,
	 we must have $c_2 \in S_3 \cup T_3$ (for otherwise, $L(v_3) \cap \{c_1, c_2\} = \emptyset$.
	But then $L(v_4) \cap \{c_1, c_2\} = \emptyset$. This is a contradiction.
	
	If $c_1 \in S_2 \cup T_2$, then since  $L(v_1) = E \cup S_1 \cup T_3 \cup T_1 \cup T_3$,
	we must have $c_2 \in S_3 \cup T_3$. But then
	and $L(v_2) \cap \{c_1, c_2\} = \emptyset$, again a contradiction.
	
	\bigskip
	\noindent
	{\bf Case 2}  $1_{\lambda} > 0$. 
	
	Assume  $\lambda =\{k_1,k_2, \ldots, k_q,1,1,\ldots, 1\}$, where $k_1, k_2, \ldots, k_a$ are odd integer greater than 1,  
	$k_{a+1}, k_{a+2}, \ldots, k_q$ are even, and there are $t >0$ copies of $1$'s.
	
	Let $k=k_{\lambda}-t$ and let 
	$G=K_{1*5,(k -1)*2}$ be the complete
		 $k$-partite graph as in Case 1,
		 and let $G'$ be obtained from $G$ by adding  $t$ partite sets of size $7$.
	Vertices in $G$ are named as in Case 1, and 
	let $\{x_{i,1}, x_{i,2}, \ldots, x_{i,7}\}$ ($i=k+1,k+2,\ldots,k+t$) be the partite sets of size $7$.
	
	For vertices in $G$, let $L(u_{i,j})$ and $L(v_i)$ be the lists as in Case 1.
	For $j=1,2,\ldots, 5$, let $L(x_{i,j})=L(v_j)$ and let  {$L(x_{i,6}) = L(u_{1,1})$ and $L(x_{i,7})=L(u_{1,2})$}. 
	Let $F$ be a set of $t$ new colours, and for each vertex $v$ of  {$G'$}, let $L'(v)=L(v) \cup F$. 
	It is routine to check that $L'$ is $\lambda$-list assignment of $G'$. 
	
	Now we show that $G'$ is not $L'$-colourable. Assume to the contrary that 
$\phi$ is an $L'$-colouring of $G'$. The $t$ colours in $F$ are used to colour $t$ partite sets. 
If the $t$ partite sets coloured by colours in $F$ are 
the $t$ partite sets of size $7$, then the restriction of $\phi$ to $G$ is an $L$-colouring of $G$, which is contrary to Case 1. If $s$ colours from $F$ are used to colour partite sets of size $2$ or the partite set of size $5$,
then there are $s$ partite sets of size $7$ left uncoloured. We delete vertices $x_{i,1},x_{i,2}, x_{i,3}, x_{i,4},x_{i,5}$ from a partite set of size $7$, the remaining two vertices play the role of a partite set of size $2$ in $G$. We delete vertices $x_{i,6},x_{i,7}$ from a partite set of size $7$, the remaining five vertices play the role of a partite set of size $5$ in $G$. So the restriction of $\phi$ to the remaining vertices also gives an $L$-colouring of $G$, which is contrary to Case 1. 
\end{proof}

\section{Proof of Theorem \ref{thm-paint}}

This section proves 
 Theorem \ref{thm-paint}, which states   that for non-trivial multi-set $\lambda=\{k_1, k_2, \ldots, k_q\}$ of positive integer, if  $k_i \le 2$ for each $i$, then $\psi(\lambda)=2k_{\lambda}+1_{\lambda}+2$.

\begin{proposition}
	\label{partition}
	If $V(G)=V_1 \cup V_2$ is a partition of $V(G)$, and $G[V_i]$ is $\lambda_i$-paintable, then $G$ is $(\lambda_1 \cup \lambda_2)$-paintable.
\end{proposition}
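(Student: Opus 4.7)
The plan is to concatenate Painter's winning strategies on the two pieces. I would first reorder the parts of $\lambda_1\cup\lambda_2$ so that all phases belonging to $\lambda_1$ come first and those belonging to $\lambda_2$ come second; this is allowed since the phases of a $\lambda$-painting game may be executed in any order. During the $\lambda_1$-phases Painter will simulate the winning $\lambda_1$-painting strategy on $G[V_1]$: when Lister presents a set $U$ of uncoloured vertices of $G$, Painter restricts attention to $U\cap V_1$, feeds this as Lister's move into the simulated game on $G[V_1]$, and colours the independent set $I\subseteq U\cap V_1$ prescribed by that strategy. Because $I$ is independent in $G[V_1]$ it is also independent in $G$, and rounds in which $U\cap V_1=\emptyset$ are harmless (no tokens of $V_1$ are spent, so the simulated game is simply paused). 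The simulation obeys the $\lambda_1$ phase structure on $V_1$, so by hypothesis all of $V_1$ is coloured by the end of the $\lambda_1$-phases.

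Next I would verify that vertices of $V_2$ survive the $\lambda_1$-phases. Each $v\in V_2$ starts with $k_{\lambda_1\cup\lambda_2}=k_{\lambda_1}+k_{\lambda_2}$ tokens and is selected in at most $k_{\lambda_1}$ rounds in total, so at every moment throughout the $\lambda_1$-phases it retains at least $k_{\lambda_2}\ge 1$ tokens and never reaches zero while uncoloured. At the start of the $\lambda_2$-phases, every vertex of $V_1$ is already coloured, so Lister can only select subsets of $V_2$; moreover every $v\in V_2$ has at least $k_{\lambda_2}$ tokens remaining (exactly $k_{\lambda_2}$ if the full $V_2$-budget was spent during the $\lambda_1$-phases). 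Painter now plays the winning $\lambda_2$-strategy on $G[V_2]$ and, by hypothesis, colours all of $V_2$; this finishes the game.

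The only mild subtlety I foresee is the book-keeping around rounds in which $U\cap V_1=\emptyset$ during the $\lambda_1$-phases, and vertices of $V_2$ entering the $\lambda_2$-phases with strictly more than $k_{\lambda_2}$ tokens if the definition permits Lister to spend less than the full budget. Both are non-issues: Painter's winning strategies are monotone in the right direction, since fewer Lister moves and more Painter tokens can only make Painter's task easier. Hence I do not expect any genuine obstacle; the proposition reduces to executing the two strategies in sequence with the token accounting above.
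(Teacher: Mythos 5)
Your proof is correct and takes essentially the same approach as the paper's: split the phases of the $(\lambda_1\cup\lambda_2)$-painting game into those assigned to $\lambda_1$ and those assigned to $\lambda_2$, and have Painter simulate the winning strategies on $G[V_1]$ and $G[V_2]$ in the respective phases. The paper's own proof is a three-sentence sketch of exactly this; your token accounting and the handling of rounds with $U\cap V_1=\emptyset$ merely fill in details the paper leaves implicit.
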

\begin{proof}
By the definition of $\lambda$-painting game, we may assume that  
 there are integers $k'_1, k'_2$
such that in the first $k'_1$ rounds, Lister plays the $\lambda_1$-painting game on $G$.
Since $G[V_1]$ is $\lambda_1$-paintable, Painter has a strategy to colour all vertices in 
$V_1$ in the first $k'_1$ rounds. Similarly, Painter has a strategy to colour all vertices in $V_2$ in the 
remaining rounds. 
\end{proof}

\begin{lemma}
	\label{2,2,...,2-paintable}
If $G$ is a $2q$-chromatic graph with $|V(G)| \leqslant 4q+1$ and $\lambda$ consists of   $q$ copies of $2$, then $G$ is $\lambda$-paintable. 
\end{lemma}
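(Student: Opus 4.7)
My plan is to induct on $q$, combining the inductive step with iterated use of Proposition~\ref{partition}. For the base case $q=1$, the statement reduces to the claim that every bipartite graph on at most $5$ vertices is $2$-paintable; the only nontrivial case is $K_{2,3}=\Theta_{2,2,2}$, which can be handled by an explicit Painter strategy (when Lister's selection meets both sides, Painter colours the independent subset on the side of size~$3$).

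For the inductive step, I would fix a proper $2q$-colouring $V_1,\ldots,V_{2q}$ of $G$ and aim to partition its colour classes into $q$ pairs such that each induced bipartite subgraph $G[V_i\cup V_j]$ is $2$-paintable; iterated Proposition~\ref{partition} will then give the desired $\lambda$-paintability of $G$ (each pair contributing one $\{2\}$-factor to $\lambda$). To build such a pairing I classify the classes by size: let $k$, $s_2$, $s_1$ denote the number of colour classes of size $\ge 3$, exactly $2$, and exactly $1$, respectively, and let $b_4$ count those of size $\ge 4$. From $k+s_1+s_2=2q$ and $b_4+3k+2s_2+s_1\le |V(G)|\le 4q+1$ one derives $k\le q$ and $s_1 \ge b_4+k-1$, in particular $s_1\ge b_4$ and $s_2\ge\max(k-s_1,0)$.

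The pairing then proceeds in four stages: (1) pair each size-$\ge 4$ big with a size-$1$ small (star, $2$-paintable); (2) pair remaining size-$3$ bigs with size-$1$ smalls while they last (still stars); (3) pair any residual size-$3$ bigs with size-$2$ mediums (subgraphs of $K_{3,2}$, $2$-paintable by the base case); (4) pair the leftover classes arbitrarily (subgraphs of $K_{2,2}$, $K_{2,1}$, or $K_{1,1}$). The inequalities $s_1\ge b_4$ and $s_2\ge k-s_1$ guarantee that each stage can actually be carried out, and every resulting pair induces a $2$-paintable bipartite subgraph (this uses also the fact that subgraphs of a $2$-paintable graph remain $2$-paintable, since Painter has only more independent sets to choose from).

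The main obstacle is the counting inequality $s_1\ge b_4$, which is precisely where the tight hypothesis $|V(G)|\le 4q+1$ is used. Without it, a size-$\ge 4$ big class could be forced to pair with a size-$2$ medium, yielding an induced copy of $K_{2,4}$, which is not $2$-paintable; the very same obstruction is how the sharpness of the bound at $|V(G)|=4q+2$ is realised (via extremal examples such as complete multipartite graphs with three-vertex parts), matching the value $\psi(\lambda)=4q+2$ claimed in Theorem~\ref{thm-paint}.
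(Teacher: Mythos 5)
Your proof is correct and is essentially the paper's argument: both pair up the colour classes so that each pair induces a star or a subgraph of $K_{2,3}$ (hence a $2$-paintable bipartite graph) and then combine via Proposition~\ref{partition}. The only difference is organizational --- you build the whole pairing at once with an explicit count, whereas the paper repeatedly peels off the largest and smallest classes and inducts on $\chi(G)$ --- and your counting does go through (from $s_1\ge b_4+k-1$ at most one size-$3$ class is left to be matched with a size-$2$ class, and in that case $s_2=2q-2k+1$ is odd, hence at least $1$).
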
                                                  
\begin{proof}
	We prove the lemma by induction on $\chi(G)$.
	As the star $K_{1,m}$ and complete bipartite graph $ K_{2,3}$ are $2$-paintable,
	 the lemma is true if $\chi(G)=2$.
Assume $k=\chi(G)=2q \ge 4$ and $V_1, V_2, \ldots, V_k$ are the colour classes, with 
$|V_1| \le |V_2| \le \ldots \le |V_k|$. If $|V_k| \ge 4$, then since $|V(G)| \le 2k+1$, 
we conclude that $|V_1|=1$. Let $X_1=V_1 \cup V_k$ and $X_2=V_2\cup V_3 \cup \ldots \cup V_{k-1}$.

Then $G[X_1]$ is a star and hence is $2$-paintable, and $G[X_2]$ is a $(k-2)$-chromatic graph with 
at most $2(k-2)$ vertices, and hence by induction hypothesis $G[X_2]$ is $\lambda'$-paintable,
where $\lambda'$ consists of $(q-1)$ copies of $2$. 
 
Otherwise, $|V_k|=3$, then $|V_1| \leqslant 2$. Let $X'_1=V_1 \cup V_k$ and $X'_2=V(G)-X'_1$. Then $G[X'_1]$ is a copy of $K_{2,3}$ or $K_{1,3}$ and hence is $2$-paintable. $G[X'_2]$ is a $(k-2)$-chromatic graph with at most $2(k-2)+1$ vertices, and then $G[X'_2]$ is $\lambda'$-chromatic, where $\lambda'$ consists of $(q-1)$ copies of $2$. 
 
By Lemma \ref{partition}, $G$ is $\lambda = \lambda' \cup \{2\}$-paintable.

\end{proof}

\noindent
{\bf Proof of   Theorem \ref{thm-paint}}

It follows from the definition that for any multiset $\lambda$ of positive integers, every $\lambda$-paintable graph is $\lambda$-choosable.  
Hence by Theorem \ref{upper lemma 1},  {$\psi(\lambda) \leqslant \phi(\lambda) \leqslant 2k_\lambda+1_\lambda+2$}. 
 
It remains to show that if every integer in $\lambda$ is at most $2$, and 
  $G$ is a $k_\lambda$-chromatic graph of order at most $2k_\lambda +1_\lambda +1$, then $G$ is $\lambda$-paintable.
 We prove this by induction on $1_\lambda$. 
	
	If $1_{\lambda}=0$, then this is Lemma \ref{2,2,...,2-paintable}.
	Assume $1_{\lambda} \ge 1$ and let $\lambda'$ be obtaind from $\lambda$ by deleting a copy of $1$. Let $c$ be a $k_{\lambda}$-colouring of $G$. Let $X_1$ be a maximum colour class  and $X_2 = V(G)-X_1$. Then 
	$|X_2| \le 2 k_\lambda +1_\lambda +1 -3 = 2(k_{\lambda}-1)+1_{\lambda}
	= 2k_{\lambda'}+1_{\lambda'}+1$. By induction hypothesis, $G[X_2]$ is $\lambda'$-paintable. It is obvious that $G[X_1]$ is $1$-paintable. By Lemma \ref{partition}, $G$ is $\lambda$-paintable.


\begin{thebibliography}{10}
	
	\bibitem{BH1985}
	B.~Bollob\'{a}s and A.~J. Harris.
	\newblock List-colourings of graphs.
	\newblock {\em Graphs Combin.}, 1(2):115--127, 1985.
	
	\bibitem{CK2017}
	Hojin Choi and Young~Soo Kwon.
	\newblock On {$t$}-common list-colorings.
	\newblock {\em Electron. J. Combin.}, 24(3):Paper 3.32, 10, 2017.
	
	\bibitem{DL2016}
	Lech Duraj, Grzegorz Gutowski, and Jakub Kozik.
	\newblock Chip games and paintability.
	\newblock {\em Electron. J. Combin.}, 23(3):Paper 3.3, 12, 2016.
	
	\bibitem{ERT1980}
	Paul Erd\H{o}s, Arthur~L. Rubin, and Herbert Taylor.
	\newblock Choosability in graphs.
	\newblock In {\em Proceedings of the {W}est {C}oast {C}onference on
		{C}ombinatorics, {G}raph {T}heory and {C}omputing ({H}umboldt {S}tate
		{U}niv., {A}rcata, {C}alif., 1979)}, Congress. Numer., XXVI, pages 125--157.
	Utilitas Math., Winnipeg, Man., 1980.
	
	\bibitem{HWZ2012}
	Po-Yi Huang, Tsai-Lien Wong, and Xuding Zhu.
	\newblock Application of polynomial method to on-line list colouring of graphs.
	\newblock {\em European J. Combin.}, 33(5):872--883, 2012.
	
	\bibitem{KV2018}
	Arnfried Kemnitz and Margit Voigt.
	\newblock A note on non-4-list colorable planar graphs.
	\newblock {\em Electron. J. Combin.}, 25(2):Paper 2.46, 5, 2018.
	
	\bibitem{KKLZ2012}
	Seog-Jin Kim, Young~Soo Kwon, Daphne Der-Fen Liu, and Xuding Zhu.
	\newblock On-line list colouring of complete multipartite graphs.
	\newblock {\em Electron. J. Combin.}, 19(1):Paper 41, 13, 2012.
	
	\bibitem{KMZ2014}
	Jakub Kozik, Piotr Micek, and Xuding Zhu.
	\newblock Towards an on-line version of ${O}hba's$ conjecture.
	\newblock {\em European J. Combin.}, 36:110--121, 2014.
	
	\bibitem{JA2013}
	Jonathan~A. Noel.
	\newblock Choosability of graphs with bounded order: Ohba's conjecture and
	beyond.
	\newblock {\em Computer Science}, 2013.
	
	\bibitem{NRW2015}
	Jonathan~A. Noel, Bruce~A. Reed, and Hehui Wu.
	\newblock A proof of a conjecture of {O}hba.
	\newblock {\em J. Graph Theory}, 79(2):86--102, 2015.
	
	\bibitem{S2009}
	Uwe Schauz.
	\newblock Mr. {P}aint and {M}rs. {C}orrect.
	\newblock {\em Electron. J. Combin.}, 16(1):Research Paper 77, 18, 2009.
	
	\bibitem{Voigt1993}
	Margit Voigt.
	\newblock List colourings of planar graphs.
	\newblock {\em Discrete Math.}, 120(1-3):215--219, 1993.
	
	\bibitem{Zhu2009}
	Xuding Zhu.
	\newblock On-line list colouring of graphs.
	\newblock {\em Electron. J. Combin.}, 16(1):Research Paper 127, 16, 2009.
	
	\bibitem{zhu2019}
	Xuding Zhu.
	\newblock A refinement of choosability of graphs.
	\newblock {\em Journal of Combinatorial Theory, Series B}, 2019.
	
\end{thebibliography}
\end{document}